\newtheorem{thm}{Theorem}[section]
\newtheorem{cor}[thm]{Corollary}
\newtheorem{lemma}[thm]{Lemma}
\theoremstyle{definition}
\newtheorem{rem}[thm]{Remark}
\newtheorem{eg}[thm]{Example}
\numberwithin{equation}{section}
\def\tr{{\rm tr}}
\begin{document}


\baselineskip=17pt



\title[determinantal inequalities]{Determinantal inequalities for block triangular matrices$^\diamondsuit$}

\author[M. Lin]{Minghua Lin}
\address{Department of Mathematics and Statistics\\
University of Victoria\\
 Victoria, BC, Canada, V8W 3R4.
}
\email{mlin87@ymail.com}

\date{}

\begin{abstract} Let  $T=\begin{bmatrix} X &Y\\ 0  &  Z\end{bmatrix}$ be an $n$-square matrix, where $X, Z$ are $r$-square and $(n-r)$-square, respectively. Among other determinantal inequalities,  it is proved
 \begin{eqnarray*}
\det\left(I_n+T^*T\right)\ge  \det\left(I_r+X^*X\right)\cdot \det\left(I_{n-r}+Z^*Z\right) \end{eqnarray*}
with equality holds if and only if $Y=0$.
\end{abstract}

\subjclass[2010]{15A45}

\keywords{determinantal inequality, block  triangular matrices.  \\ $\diamondsuit$ Dedicated to Stephen Drury, whose beautiful works on matrix analysis inspire the author.
}

\maketitle

\section{Introduction}
The well known Fischer inequality  \cite[p. 506]{HJ13} states that if $A=\begin{bmatrix} A_{11} & A_{12}  \\ A_{12}^*  & A_{22}\end{bmatrix}$ is positive semidefinite, then \begin{eqnarray}\label{fis1} \det A\le \det A_{11}\cdot \det A_{22}.
\end{eqnarray}

As any positive semidefinite matrix $A$ can be factorized as $A=T^*T$ with $T=\begin{bmatrix} X &Y\\ 0  &  Z\end{bmatrix}$ being comformally partitioned as $A$, inequality (\ref{fis1}) can be written as \begin{eqnarray}\label{fis2} \qquad \det X^*X\cdot \det Z^*Z=\det T^*T \le \det X^*X\cdot \det (Y^*Y+Z^*Z).
\end{eqnarray}

This paper presents some results that complement (\ref{fis2}). We believe our results are of new pattern concerning determinantal inequalities.  Let us fix some notation.
The matrices considered here have entries from the field of complex numbers. $X', \overline{X}, X^*$ stands for transpose, (entrywise)conjugate, conjugate transpose of $X$, respectively.  For two $n$-square Hermitian matrices $X, Y$, we write $X\ge Y$ to mean $X-Y$ is positive semidefinite (so $X\ge 0$ means $X$ is positive semidefinite). The $n$-square identity matrix is denoted by $I_n$. The Frobenius norm of $X$ is denoted by $\|X\|_F$. It is known that $\|X\|_F=\sqrt{\tr X^*X}$, where $\tr$ denotes the trace.    If  $X=\begin{bmatrix} X_{11} & X_{12}  \\ X_{21}  & X_{22}\end{bmatrix}$ is an $n$-square matrix with $X_{11}$ nonsingular, then the Schur complement of $X_{11}$ in $X$ is defined by $X/X_{11}=X_{22}-X_{21}^*X_{11}^{-1}X_{12}$. A well known property of the Schur complement is  $\det X=\det X_{11}\cdot \det(X/X_{11})$.   Finally, for an $n$-square matrix, we denote by $\lambda_j(X)$ and $\sigma_j(X)$, $j=1, \ldots, n$, the eigenvalues and singular values  of $X$, respectively, such that $|\lambda_1(X)|\ge \cdots \ge |\lambda_n(X)|$ and $\sigma_1(X)\ge \cdots \ge \sigma_n(X)$.

\section{Main results}
We present the following result, showing that when more matrices are summed, the identity in (\ref{fis2}) becomes an inequality.

\begin{thm}\label{thm1} Let $T_k=\begin{bmatrix} X_k &Y_k\\ 0  &  Z_k\end{bmatrix}$, $k=1, \ldots, m$, be $n$-square conformally partitioned  matrices. Then
 \begin{eqnarray} \label{thm1e}
\det\left(\sum_{k=1}^m T_k^*T_k\right)\ge\det\left(\sum_{k=1}^m X_k^*X_k\right)\cdot\det\left(\sum_{k=1}^m Z_k^*Z_k\right).\end{eqnarray}    \end{thm}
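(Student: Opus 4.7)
The plan is to use the Schur complement to reduce the inequality on the block matrix $\sum_k T_k^* T_k$ to an operator inequality between the $(2,2)$-block and $\sum_k Z_k^* Z_k$, and then to recognize the remaining inequality as a standard Cauchy--Schwarz/projection statement.

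First I compute
\[
T_k^* T_k \;=\; \begin{bmatrix} X_k^* X_k & X_k^* Y_k \\ Y_k^* X_k & Y_k^* Y_k + Z_k^* Z_k \end{bmatrix},
\]
and set $A=\sum_k X_k^* X_k$, $B=\sum_k X_k^* Y_k$, $C=\sum_k(Y_k^* Y_k+Z_k^* Z_k)$, so that $\sum_k T_k^* T_k=\left[\begin{smallmatrix} A & B\\ B^* & C\end{smallmatrix}\right]$. By a standard perturbation (replace $A$ by $A+\varepsilon I_r$ and let $\varepsilon\downarrow 0$), I may assume $A$ is nonsingular, and the Schur complement identity gives
\[
\det\!\Bigl(\sum_k T_k^* T_k\Bigr) \;=\; \det A \cdot \det\!\bigl(C-B^* A^{-1} B\bigr).
\]
The desired inequality \eqref{thm1e} therefore reduces to $\det(C-B^* A^{-1} B)\ge \det\!\bigl(\sum_k Z_k^* Z_k\bigr)$, which I will obtain from the stronger operator inequality
\[
C-B^* A^{-1} B \;\ge\; \sum_k Z_k^* Z_k,
\]
i.e.\ $\sum_k Y_k^* Y_k \ge \bigl(\sum_k Y_k^* X_k\bigr)\bigl(\sum_k X_k^* X_k\bigr)^{-1}\bigl(\sum_k X_k^* Y_k\bigr)$.

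This last inequality is a Cauchy--Schwarz statement that follows from positive semidefiniteness of
\[
\sum_{k=1}^m \begin{bmatrix} X_k & Y_k \end{bmatrix}^* \begin{bmatrix} X_k & Y_k \end{bmatrix} \;=\; \begin{bmatrix} \sum_k X_k^* X_k & \sum_k X_k^* Y_k \\ \sum_k Y_k^* X_k & \sum_k Y_k^* Y_k \end{bmatrix}\ge 0,
\]
combined with the Schur complement characterization of positive semidefinite block matrices. Equivalently, stacking $M=[X_1^*\,\cdots\,X_m^*]^*$ and $N=[Y_1^*\,\cdots\,Y_m^*]^*$, the inequality becomes $N^*(I-M(M^*M)^{-1}M^*)N\ge 0$, which is obvious since $I-M(M^*M)^{-1}M^*$ is an orthogonal projection.

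The only step requiring any care is the degeneracy of $A$, handled by the $\varepsilon I$ perturbation and continuity of the determinant; the rest is formal manipulation. I do not foresee a genuine obstacle: once the Schur-complement reduction is performed, the remaining inequality is essentially Cauchy--Schwarz.
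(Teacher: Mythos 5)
Your proposal is correct and follows essentially the same route as the paper: compute $T_k^*T_k$ blockwise, reduce via the Schur complement determinant identity, and deduce $\sum_k Y_k^*Y_k \ge \bigl(\sum_k Y_k^*X_k\bigr)\bigl(\sum_k X_k^*X_k\bigr)^{-1}\bigl(\sum_k X_k^*Y_k\bigr)$ from positive semidefiniteness of $\sum_k \begin{bmatrix} X_k & Y_k\end{bmatrix}^*\begin{bmatrix} X_k & Y_k\end{bmatrix}$, with a continuity argument handling the singular case. The only cosmetic difference is your closing reformulation via the stacked matrices $M,N$ and the orthogonal projection $I-M(M^*M)^{-1}M^*$, which is a pleasant but equivalent way of seeing the same Cauchy--Schwarz step.
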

\begin{proof} By a standard continuity argument, we may assume $X_k^*X_k$ is nonsingular for $k=1, \ldots, m$. As
\begin{eqnarray*} \begin{bmatrix} X_k^*X_k & X_k^*Y_k  \\ Y_k^*X_k   & Y_k^*Y_k \end{bmatrix}=\begin{bmatrix}X_k&Y_k\end{bmatrix}^*\begin{bmatrix}X_k&Y_k\end{bmatrix}\ge 0,
\end{eqnarray*} summing for $k$ from $1$ to $m$ gives
 \begin{eqnarray*} \begin{bmatrix} \sum_{k=1}^mX_k^*X_k & \sum_{k=1}^mX_k^*Y_k  \\ \sum_{k=1}^mY_k^*X_k   & \sum_{k=1}^mY_k^*Y_k \end{bmatrix}\ge 0,
\end{eqnarray*}  Hence, \begin{eqnarray*}  \sum_{k=1}^mY_k^*Y_k -\left(\sum_{k=1}^mY_k^*X_k\right)\left(\sum_{k=1}^mX_k^*X_k\right)^{-1} \left(   \sum_{k=1}^mX_k^*Y_k\right)\ge 0.
\end{eqnarray*}
On the other hand, $T_k^*T_k=\begin{bmatrix} X_k^*X_k &   X_k^*Y_k  \\  Y_k^*X_k   &   Y_k^*Y_k+Z_k^*Z_k \end{bmatrix}$. Thus
\begin{eqnarray*} \left(\sum_{k=1}^mT_k^*T_k\right)\Big/\left(\sum_{k=1}^mX_k^*X_k\right)&=&\sum_{k=1}^m(Y_k^*Y_k+Z_k^*Z_k)\\&&-\left(\sum_{k=1}^mY_k^*X_k\right)\left(\sum_{k=1}^mX_k^*X_k\right)^{-1} \left(\sum_{k=1}^mX_k^*Y_k\right)\\&\ge& \sum_{k=1}^mZ_k^*Z_k\ge 0.
\end{eqnarray*}
Applying the determinant on both sides gives the desired inequality.\end{proof}

\begin{rem} By a simple induction, Theorem \ref{thm1} can be extended to the $p\times p$ ($p>2$) block upper triangular case.   \end{rem}

A full characterization of the equality case in (\ref{thm1e}) is nasty, so we do not include it here. We extract a special case of Theorem \ref{thm1} with $m=2$ for later use. Moreover, the equality case is concise.

\begin{cor} \label{c0}   Let  $T=\begin{bmatrix} X &Y\\ 0  &  Z\end{bmatrix}$ be an $n$-square matrix, where $X, Z$ are $r$-square and $(n-r)$-square, respectively. Then
 \begin{eqnarray}\label{c0e} \det\left(I_n+T^*T\right)\ge  \det\left(I_r+X^*X\right)\cdot \det\left(I_{n-r}+Z^*Z\right).
 \end{eqnarray} Equality holds in (\ref{c0e}) if and only if $Y=0$.  \end{cor}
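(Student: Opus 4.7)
The plan is to derive (\ref{c0e}) as a direct specialization of Theorem \ref{thm1}. I would take $m = 2$, $T_1 = T$, and $T_2 = I_n$, with $I_n$ conformally partitioned so that $X_2 = I_r$, $Y_2 = 0$, and $Z_2 = I_{n-r}$. Under this choice the sums in (\ref{thm1e}) collapse to $I_n + T^*T$, $I_r + X^*X$, and $I_{n-r} + Z^*Z$ respectively, and the inequality (\ref{c0e}) drops out immediately.

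For the equality case I would revisit the Schur-complement step inside the proof of Theorem \ref{thm1} under this specific choice; it simplifies to $(I_n + T^*T)/(I_r + X^*X) = (I_{n-r} + Z^*Z) + Y^*\bigl(I_r - X(I_r + X^*X)^{-1}X^*\bigr)Y$. Combining $\det(I_n + T^*T) = \det(I_r + X^*X)\cdot \det\bigl((I_n + T^*T)/(I_r + X^*X)\bigr)$ with the fact that $I_{n-r} + Z^*Z$ is positive definite, equality of the two sides of (\ref{c0e}) is equivalent to the nonnegative ``extra'' term $Y^*\bigl(I_r - X(I_r + X^*X)^{-1}X^*\bigr)Y$ vanishing.

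The main obstacle---the only step that is not purely formal---is to see that the sandwiched matrix $I_r - X(I_r + X^*X)^{-1}X^*$ is itself positive definite. I would dispatch this via the identity $I_r - X(I_r + X^*X)^{-1}X^* = (I_r + XX^*)^{-1}$, which can be verified in one line of multiplication (or read off from the Sherman--Morrison--Woodbury formula). Since $(I_r + XX^*)^{-1}$ is positive definite, writing it as $L^*L$ with $L$ invertible immediately yields $(LY)^*(LY) = 0$, hence $Y = 0$. The converse is trivial: $Y = 0$ makes $T$ block diagonal, turning (\ref{c0e}) into an equality.
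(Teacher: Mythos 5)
Your proof is correct and follows essentially the same route as the paper: specialize Theorem \ref{thm1} to $m=2$ with $T_2=I_n$, then use the Schur complement of $I_r+X^*X$ in $I_n+T^*T$ to reduce the equality case to the vanishing of $Y^*\bigl(I_r-X(I_r+X^*X)^{-1}X^*\bigr)Y$. The only (cosmetic) difference is that you identify $I_r-X(I_r+X^*X)^{-1}X^*$ as $(I_r+XX^*)^{-1}$ via the Woodbury identity, whereas the paper establishes positive definiteness by computing its eigenvalues $1/\bigl(1+\lambda_{r-j+1}(X^*X)\bigr)$; both close the argument.
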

\begin{proof} It suffices to show the equality case. If $Y=0$, clearly (\ref{c0e}) becomes an equality. Conversely, if the equality in  (\ref{c0e}) holds, then
\begin{eqnarray}\label{c0e1} \begin{aligned} &\det\Big(I_{n-r}+Z^*Z+Y^*Y-Y^*X(I_r+X^*X)^{-1}X^*Y\Big)\\ =&\det(I_{n-r}+Z^*Z).\end{aligned}
\end{eqnarray} As $Y^*Y-Y^*X(I_r+X^*X)^{-1}X^*Y\ge 0$, the equality (\ref{c0e1}) holds   only if $Y^*Y-Y^*X(I_r+X^*X)^{-1}X^*Y=0$, i.e,
\begin{eqnarray*}Y^*\Big(I_r-X(I_r+X^*X)^{-1}X^*\Big)Y=0.
\end{eqnarray*} Now for any $j=1, \ldots, r$, \begin{eqnarray*} \lambda_j\Big(I_r-X(I_r+X^*X)^{-1}X^*\Big)&=&1-\frac{\lambda_{r-j+1}(X^*X)}{1+\lambda_{r-j+1}(X^*X)}\\&=&\frac{1}{1+\lambda_{r-j+1}(X^*X)}>0.
\end{eqnarray*} so $I_r-X(I_r+X^*X)^{-1}X^*$ is positive definite, forcing $Y=0$.   \end{proof}

\begin{cor} \label{c1} Let $T_k=\begin{bmatrix} X_k &Y_k\\ 0  &  Z_k\end{bmatrix}$, $k=1, \ldots, m$, be $n$-square conformally partitioned  matrices. If $X_k, Z_k$ are all normal matrices, then
 \begin{eqnarray}\label{c1e}
\det\left(\sum_{k=1}^m T_k^*T_k\right)\ge \left|\det\left(\sum_{k=1}^m \overline{X}_kX_k\right)\right|\cdot\left|\det\left(\sum_{k=1}^m \overline{Z}_kZ_k\right)\right|.\end{eqnarray} \end{cor}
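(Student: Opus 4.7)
The plan is to deduce Corollary~\ref{c1} from Theorem~\ref{thm1} by means of the following auxiliary inequality for normal matrices $N_1,\ldots,N_m$ of the same size:
\begin{equation*}
\det\Bigl(\sum_{k=1}^m N_k^* N_k\Bigr) \;\ge\; \Bigl|\det\Bigl(\sum_{k=1}^m \overline{N}_k N_k\Bigr)\Bigr|.
\end{equation*}
Once this is proved, applying it separately to $\{X_k\}$ and to $\{Z_k\}$, which are normal by hypothesis, and multiplying the two resulting inequalities against the bound of Theorem~\ref{thm1}, will immediately yield \eqref{c1e}.

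To prove the auxiliary inequality, I would form the block matrix
\begin{equation*}
M=\begin{bmatrix} N_1^T & N_1\\ \vdots & \vdots\\ N_m^T & N_m\end{bmatrix},
\end{equation*}
so that $M^*M\ge 0$. Using $(N_k^T)^*=\overline{N}_k$, the $2\times 2$ block form of $M^*M$ reads
\begin{equation*}
M^*M=\begin{bmatrix} \sum_{k=1}^m \overline{N}_k N_k^T & \sum_{k=1}^m \overline{N}_k N_k\\ \sum_{k=1}^m N_k^* N_k^T & \sum_{k=1}^m N_k^* N_k\end{bmatrix}\ge 0.
\end{equation*}
The standard Schur-complement inequality for positive semidefinite $2\times 2$ block matrices, namely $|\det Q|^2\le \det P\cdot \det R$ whenever $\left[\begin{smallmatrix}P&Q\\Q^*&R\end{smallmatrix}\right]\ge 0$, then yields
\begin{equation*}
\Bigl|\det\Bigl(\sum_{k=1}^m \overline{N}_k N_k\Bigr)\Bigr|^2 \;\le\; \det\Bigl(\sum_{k=1}^m \overline{N}_k N_k^T\Bigr)\cdot \det\Bigl(\sum_{k=1}^m N_k^* N_k\Bigr).
\end{equation*}

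The remaining and most substantive step is to show that the two factors on the right-hand side coincide under normality. First, $\sum_{k=1}^m \overline{N}_k N_k^T=\sum_{k=1}^m (N_k^T)^* N_k^T$ is a Gram sum, hence positive semidefinite, so its determinant is a nonnegative real number. Its entrywise conjugate equals $\sum_{k=1}^m N_k\,\overline{N_k^T}=\sum_{k=1}^m N_k N_k^*$ (using $\overline{N_k^T}=N_k^*$), so $\det(\sum_{k=1}^m \overline{N}_k N_k^T)=\det(\sum_{k=1}^m N_k N_k^*)$. Normality of the $N_k$, i.e.\ $N_k N_k^*=N_k^* N_k$, then gives $\det(\sum_{k=1}^m N_k N_k^*)=\det(\sum_{k=1}^m N_k^* N_k)$, which closes the argument: substituting into the displayed bound and extracting a square root delivers the auxiliary inequality, and hence \eqref{c1e}.

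The main conceptual obstacle, in my view, is selecting the right pairing in $M$: it is the choice of $N_k^T$ (rather than $N_k^*$) in the first block column that places $\sum_{k=1}^m \overline{N}_k N_k$ in the off-diagonal of $M^*M$, and at the same time makes the top-left diagonal block a Gram sum whose determinant is forced to agree with that of the bottom-right block precisely because the $N_k$ are normal. Without that specific choice, one only recovers the content of Theorem~\ref{thm1}.
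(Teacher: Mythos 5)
Your proposal is correct and takes essentially the same route as the paper: the same Gram-type block matrix pairing $N_k^T$ against $N_k$, the same determinantal inequality $|\det Q|^2\le \det P\cdot\det R$ for positive semidefinite block matrices, and the same use of normality to identify the two diagonal determinants. The only (immaterial) difference is that you identify $\det\bigl(\sum_k \overline{N}_k N_k^T\bigr)$ with $\det\bigl(\sum_k N_k N_k^*\bigr)$ by taking entrywise conjugates and using positivity, whereas the paper takes transposes.
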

\begin{proof} In view of Theorem \ref{thm1}, it suffices to show \begin{eqnarray*}\det\left(\sum_{k=1}^m X_k^*X_k\right)\ge \left|\det\left(\sum_{k=1}^m \overline{X}_kX_k\right)\right|.\end{eqnarray*} As \begin{eqnarray*} \begin{bmatrix} \sum_{k=1}^m\overline{X}_kX_k' & \sum_{k=1}^m\overline{X}_kX_k  \\ \sum_{k=1}^m X_k^*X_k'   & \sum_{k=1}^mX_k^*X_k \end{bmatrix}=\sum_{k=1}^m \begin{bmatrix} X_k'& X_k\end{bmatrix}^*\begin{bmatrix} X_k'& X_k\end{bmatrix}\ge 0,
\end{eqnarray*} this yields \begin{eqnarray*} \begin{bmatrix} \det\left(\sum_{k=1}^m\overline{X}_kX_k'\right) & \det \left(\sum_{k=1}^m\overline{X}_kX_k\right) \\ \det\left(\sum_{k=1}^m X_k^*X_k'\right)   &  \det\left(\sum_{k=1}^mX_k^*X_k\right) \end{bmatrix} \ge 0,
\end{eqnarray*} and so \begin{eqnarray*} \det\left(\sum_{k=1}^m\overline{X}_kX_k'\right)\cdot\det\left(\sum_{k=1}^mX_k^*X_k\right)\ge \left|\det\left(\sum_{k=1}^m \overline{X}_kX_k\right)\right|^2.
\end{eqnarray*} But $X_k$, $k=1, \ldots, m$, are normal, and so \begin{eqnarray*} \det\left(\sum_{k=1}^m\overline{X}_kX_k'\right)&=&\det\left(\sum_{k=1}^m\overline{X}_kX_k'\right)'\\ &=&\det\left(\sum_{k=1}^mX_kX_k^*\right) =\det\left(\sum_{k=1}^mX_k^*X_k\right),\end{eqnarray*}  as desired.  \end{proof}

 The author does not know if there is a simple  characterization for the equality case in (\ref{c1e}). The following example shows that (\ref{c1e}) may fail without the normality assumption.

\begin{eg} Taking
\begin{eqnarray*}T_1=\left[\begin{array}{cc:cc} -9&10&5&12\\-7&10&-11&-10 \\ \hdashline
 0&0&-2&3 \\ 0&0&2&26 \end{array}\right], \qquad   T_2=\left[\begin{array}{cc:cc} 13&-16&3&3\\-7& 9&3&11\\ \hdashline
 0&0&3&-16 \\ 0&0&-7&-13 \end{array}\right],
\end{eqnarray*} a simple calculation using Matlab gives \begin{eqnarray*}\det\left(T_1^*T_1+T_2^*T_2\right)&=&1.25 \times 10^8\\&<&\left|\det\left(X_1^2+X_2^2\right)\right|\cdot \left|\det\left(Z_1^2+Z_2^2\right)\right|=9.93\times 10^8.\end{eqnarray*} \end{eg}

Our next result says that, to some extent, Corollary \ref{c1} can be improved.
\begin{thm}\label{thm2}  Let $T=\begin{bmatrix} X &Y\\ 0  &  Z\end{bmatrix}$ be an $n$-square matrix, where $X, Z$ are $r$-square and $(n-r)$-square, respectively. Then
 \begin{eqnarray}\label{thm2e}
\det\left(I_n+T^*T\right)\ge  \det\left(I_r+\overline{X}X\right)\cdot \det\left(I_{n-r}+\overline{Z}Z\right).\end{eqnarray}
Equality holds in (\ref{thm2e})  if and only if $Y=0$ and $X, Z$ are symmetric.
   \end{thm}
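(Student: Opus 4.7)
The plan is to derive~(\ref{thm2e}) from Corollary~\ref{c0} by combining it with the auxiliary inequality
\[
\det(I_r+X^*X)\ \ge\ \bigl|\det(I_r+\overline{X}X)\bigr|,
\]
valid for every square $X$, with equality precisely when $X=X'$. (Note that $\det(I_r+\overline{X}X)$ is automatically real, since its entrywise conjugate equals $\det(I_r+X\overline{X})$, which equals $\det(I_r+\overline{X}X)$ by Sylvester's identity.) Granting this lemma applied separately to $X$ and to $Z$, the chain
\[
\det(I_n+T^*T)\ \ge\ \det(I_r+X^*X)\det(I_{n-r}+Z^*Z)\ \ge\ \bigl|\det(I_r+\overline{X}X)\det(I_{n-r}+\overline{Z}Z)\bigr|
\]
(whose first step is Corollary~\ref{c0}) dominates $\det(I_r+\overline{X}X)\det(I_{n-r}+\overline{Z}Z)$, since $|c|\ge c$ for any real $c$; this is (\ref{thm2e}). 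Equality throughout the chain forces $Y=0$ (from Corollary~\ref{c0}) and $X$, $Z$ symmetric (from the lemma); conversely those conditions collapse every inequality, since $X=X'$ yields $\overline{X}=X^*$ and hence $\overline{X}X=X^*X\ge 0$.

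For the auxiliary lemma itself, let $\mu_1,\dots,\mu_r$ be the eigenvalues of $\overline{X}X$ and write $\sigma_i=\sigma_i(X)$. I will establish
\[
\bigl|\det(I+\overline{X}X)\bigr|\ =\ \prod_i|1+\mu_i|\ \le\ \prod_i(1+|\mu_i|)\ \le\ \prod_i(1+\sigma_i^2)\ =\ \det(I+X^*X).
\]
The substantive middle step follows from the log-majorization $\{|\mu_i|\}\prec_{\log}\{\sigma_i^2\}$, which I obtain by composing Weyl's inequality $\prod_1^k|\mu_i|\le\prod_1^k\sigma_i(\overline{X}X)$ with Horn's multiplicative inequality $\prod_1^k\sigma_i(\overline{X}X)\le\prod_1^k\sigma_i(\overline{X})\sigma_i(X)=\prod_1^k\sigma_i^2$, noting equality at $k=r$ since both total products equal $|\det X|^2$. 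Because $t\mapsto\log(1+e^t)$ is convex and increasing, the log-majorization converts into the desired bound.

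The equality analysis is the main obstacle, and I plan to handle it through the Frobenius identity
\[
\tr(X^*X)-\tr(\overline{X}X)\ =\ \tfrac12\|X-X'\|_F^2,
\]
which drops out of a one-line pairing of $(i,j)$ and $(j,i)$ entries in the two traces. Equality $|1+\mu_i|=1+|\mu_i|$ forces each $\mu_i\ge 0$, and equality in the log-majorization step --- via strict convexity of $t\mapsto\log(1+e^t)$ together with the usual majorization-equality characterization --- forces $|\mu_i|=\sigma_i^2$. Combining, $\mu_i=\sigma_i^2$ for every $i$, so $\tr(\overline{X}X)=\sum\mu_i=\sum\sigma_i^2=\tr(X^*X)$, and the Frobenius identity yields $X=X'$. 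The delicate point is the strict-convexity argument forcing $|\mu_i|=\sigma_i^2$, but once the trace equality is in hand the conclusion $X=X'$ is essentially automatic.
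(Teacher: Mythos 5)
Your proposal is correct and follows essentially the same route as the paper: reduce to Corollary~\ref{c0} plus the auxiliary inequality $\det(I+X^*X)\ge\det(I+\overline{X}X)$ with equality iff $X=X'$, prove that lemma by combining Weyl's inequality with the convexity of $t\mapsto\log(1+e^t)$, and settle the equality case via the trace identity $\|X-X'\|_F^2=2(\tr X^*X-\tr\overline{X}X)$. The only cosmetic differences are that you establish the lemma's inequality directly from the log-majorization chain (and drop the absolute value via $|c|\ge c$), whereas the paper first gets $\det(I+X^*X)\ge|\det(I+\overline{X}X)|$ from a $2\times2$ determinant-compression argument and invokes Djokovi\'c's nonnegativity; both then run the identical majorization argument for the equality analysis.
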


\begin{rem}   Compared with (\ref{c1e}), we don't use absolute value on the right hand side of (\ref{thm2e}). This is because $\det\left(I_r+\overline{X}X\right)\ge 0$, an observation by   Djokovi\'{c}  \cite{Djo76, DL76}. However, it is possible that $\det\left(\sum_{k=1}^m \overline{X}_kX_k\right)<0$  in (\ref{c1e}). For example, taking \begin{eqnarray*} X_1=X_2'=\begin{bmatrix} 1 &   2  \\ 0  & 1\end{bmatrix},
\end{eqnarray*}  a simple calculation gives  \begin{eqnarray*} \det(\overline{X}_1X_1+\overline{X}_2X_2)=\det \begin{bmatrix} 2 &  4 \\4  & 2\end{bmatrix}=-12<0.
\end{eqnarray*}\end{rem}

 We need a lemma, which plays a key role in establishing the equality case in Theorem \ref{thm2}.

 \begin{lemma}\label{lem1} Let $X$ be an $n$-square matrix. Then
 \begin{eqnarray}\label{lem1e}
  \det(I_n+X^*X)\ge \det\left(I_n+\overline{X}X\right).\end{eqnarray}
 Equality holds in (\ref{lem1e})  if and only if $X$ is symmetric.    \end{lemma}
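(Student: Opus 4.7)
The plan is to use the Cauchy--Binet formula to expand both $\det(I_n + X^*X)$ and $\det(I_n + \overline X X)$ as explicit quadratic expressions in the $k\times k$ minors of $X$, and then to recognize their difference as a nonnegative sum of squared moduli. My starting point is the identity $\det(I_n + M) = \sum_{S} \det M[S|S]$, with $S$ ranging over all subsets of $\{1,\dots,n\}$ including the empty one (contributing $1$), applied to both $M = X^*X$ and $M = \overline X X$.

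First I would set $a_{ST} := \det X[S|T]$ for subsets $S,T$ of equal size. The principal submatrix $(X^*X)[S|S]$ is the Gram matrix of the columns of $X$ indexed by $S$, so Cauchy--Binet yields $\det(X^*X)[S|S] = \sum_{|T|=|S|} |a_{TS}|^2$. For the non-Gram product $(\overline X X)[S|S] = \overline{X[S|\cdot]}\, X[\cdot|S]$, the same formula gives $\det(\overline X X)[S|S] = \sum_{|T|=|S|} \overline{a_{ST}}\, a_{TS}$, with the bar from $\overline X$ landing on $a_{ST}$ rather than on $a_{TS}$. Summing over $S$,
\[
\det(I_n + X^*X) = \sum_{|S|=|T|} |a_{ST}|^2, \qquad \det(I_n + \overline X X) = \sum_{|S|=|T|} \overline{a_{ST}}\, a_{TS}.
\]
Both sums are invariant under the swap $(S,T)\leftrightarrow(T,S)$, so pairing the two ordered contributions of each unordered pair produces
\[
\det(I_n + X^*X) - \det(I_n + \overline X X) = \tfrac{1}{2}\sum_{|S|=|T|} |a_{ST} - a_{TS}|^2 \ge 0,
\]
which is (\ref{lem1e}).

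For the equality clause, the sum of squares vanishes precisely when $a_{ST}=a_{TS}$ for every equal-cardinality pair. Specializing to $|S|=|T|=1$ gives $X_{ij}=X_{ji}$ for all $i,j$, so $X$ must be symmetric; conversely, if $X = X^T$ then $X[T|S] = X[S|T]^\top$, hence $a_{TS}=a_{ST}$ and the sum of squares vanishes identically. The step I expect to need real care is the Cauchy--Binet computation for $(\overline X X)[S|S]$: this product is bilinear rather than sesquilinear in the rows/columns of $X$, and it is precisely the crossed pairing $\overline{a_{ST}}\, a_{TS}$ (not $|a_{ST}|^2$) that allows the two expansions to assemble cleanly into a sum-of-squares difference. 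Once that bookkeeping is in place, the remainder is routine.
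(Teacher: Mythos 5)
Your argument is correct, and it is a genuinely different route from the paper's. You expand both determinants via $\det(I_n+M)=\sum_S\det M[S|S]$ and Cauchy--Binet, obtaining the exact identity
\begin{equation*}
\det(I_n+X^*X)-\det\left(I_n+\overline{X}X\right)=\tfrac12\sum_{|S|=|T|}\left|\det X[S|T]-\det X[T|S]\right|^2,
\end{equation*}
which delivers the inequality and the equality characterization simultaneously (the bookkeeping checks out: the second sum $\sum\overline{a_{ST}}a_{TS}$ is sent to its own conjugate by the swap $(S,T)\leftrightarrow(T,S)$, hence is real and the pairing closes). The paper instead derives the inequality from the $2\times 2$ positive semidefinite block trick already used for Corollary \ref{c1}, namely $\det(I_n+XX^*)\det(I_n+X^*X)\ge\det(I_n+\overline{X}X)^2$ together with Djokovi\'c's observation that $\det(I_n+\overline{X}X)\ge 0$, and then handles the equality case separately via Weyl's majorization, the strict convexity of $\log(1+e^t)$, and a Frobenius-norm computation showing $\tr X^*X=\tr\overline{X}X$ forces $X=X'$. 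Your approach is more elementary and self-contained --- it needs neither Djokovi\'c's result nor any majorization/convexity input --- and it yields a sharper quantitative statement (an exact sum-of-squares formula for the gap); the paper's approach has the advantage of reusing machinery already set up for Corollary \ref{c1} and of the kind that generalizes to the $|T|^p$ setting of Theorem \ref{thm3}.
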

 \begin{proof}    From the  proof of Corollary \ref{c1}, we have
 $$\det (I_n+XX^*)\cdot\det(I_n+X^*X)\ge \det\left(I_n+\overline{X}X\right)^2.$$
 Note that $$\det(I_n+\overline{X}X')=\det(I_n+\overline{X}X')'=\det(I_n+XX^*)=\det(I_n+X^*X).$$ This proves (\ref{lem1e}).

 If $X$ is symmetric, then $\overline{X}=X^*$ and so  $\det\left(I_n+\overline{X}X\right)=\det(I_n+X^*X)$.

 We  show the other way around.  It is clear that
   $$\det\left(I_n+\overline{X}X\right)=\prod_{j=1}^n\Big(1+\lambda_j(\overline{X}X)\Big)\le \prod_{j=1}^n\Big(1+|\lambda_j(\overline{X}X)|\Big)$$ with the second inequality becoming an equality only if $\lambda_j(\overline{X}X)\ge 0$ for all $j$.

   By Weyl's inequality \cite[p. 317]{MOA11}, for $k=1, \ldots, n$,  \begin{eqnarray*} \prod_{j=1}^k|\lambda_j(\overline{X}X)|&\le& \prod_{j=1}^k \sigma_j(\overline{X}X)\\&\le& \prod_{j=1}^k  \sigma_j(\overline{X})\sigma_j(X)=\prod_{j=1}^k  \sigma_j^2(X)=\prod_{j=1}^k  \sigma_j(X^*X),
  \end{eqnarray*}
   where equality holds when $k=n$.   The strict convexity of the function $f(t)=\log(1+e^{t})$ (\cite[p. 156]{MOA11}) implies that
   $$\prod_{j=1}^n\Big(1+|\lambda_j(\overline{X}X)|\Big)\le \prod_{j=1}^n\Big(1+\sigma_j(X^*X)\Big)=\det(I_n+X^*X)$$
   with  the first inequality becoming an equality only if $\overline{X}X$ is normal.

    Thus, if $\det\left(I_n+\overline{X}X\right)=\det(I_n+X^*X)$, then $\overline{X}X\ge 0$ and $\lambda_j(\overline{X}X)=\sigma_j(X^*X)$ for all $j$. In particular, $\tr \overline{X}X=\tr X^*X$.  We shall show the latter implies that $X$ is symmetric.
     Compute
  \begin{eqnarray*} \|X-X'\|_F^2&=& \tr (X-X')^*(X-X')\\&=& \tr X^*X- \tr \overline{X}X -\tr (\overline{X}X)^*+\tr \overline{X}X'
  \\&=&2\Big(\tr X^*X-\tr \overline{X}X\Big)=0,
\end{eqnarray*}
  and so $X=X'$, as required.     \end{proof}

\noindent {\it Proof of  Theorem \ref{thm2}.}  The inequality (\ref{thm2e}) follows from (\ref{c0e}) and (\ref{lem1e}).

 If $Y=0$ and $X, Z$ are symmetric, then   \begin{eqnarray*}\det\left(I_n+T^*T\right)&=&\det(I_r+X^*X)\cdot\det(I_{n-r}+Z^*Z) \\&=&\det\left(I_r+\overline{X}X\right)\cdot \det\left(I_{n-r}+\overline{Z}Z\right).
\end{eqnarray*}

Conversely, if  the equality holds in (\ref{thm2e}), then actually we have  \begin{eqnarray*}\det\left(I_n+T^*T\right)&=&\det(I_r+X^*X)\cdot\det(I_{n-r}+Z^*Z) \\&=&\det\left(I_r+\overline{X}X\right)\cdot \det\left(I_{n-r}+\overline{Z}Z\right).
\end{eqnarray*} In view of Corollary \ref{c0}, the first equality gives $Y=0$. By Lemma \ref{lem1}, the second equality implies that $X, Z$ are symmetric. \qed

 An immediate consequence of  Theorem \ref{thm2} is the following, which is due to Drury  \cite[Lemma 4]{Dru02}.
 \begin{cor} {\bf (Drury's inequality)}  Let $T=[t_{ij}]$ be an $n$-square upper triangular matrix.  Then
 \begin{eqnarray*}
\det\left(I_n+T^*T\right)\ge \prod_{j=1}^n(1+|t_{jj}|^2).\end{eqnarray*} Equality holds if and only if $T$ is diagonal.  \end{cor}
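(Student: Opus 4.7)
The plan is to apply Theorem~\ref{thm2} exactly once, taking the partition $r=1$, so that $X=[t_{11}]$ is a $1\times 1$ scalar block and $Z$ is the lower-right $(n-1)\times(n-1)$ block of $T$. Theorem~\ref{thm2} then yields directly
\[
\det(I_n+T^*T)\;\ge\;(1+|t_{11}|^2)\cdot\det(I_{n-1}+\overline{Z}Z).
\]

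The observation that makes this single application sufficient is that $Z$ inherits upper triangularity from $T$, and the product of two upper triangular matrices is upper triangular with diagonal equal to the entrywise product of the diagonals. Hence $\overline{Z}Z$ is upper triangular with $j$th diagonal entry $|z_{jj}|^2=|t_{j+1,j+1}|^2$, so $I_{n-1}+\overline{Z}Z$ is upper triangular and its determinant reads off immediately as $\prod_{j=2}^{n}(1+|t_{jj}|^2)$. Substituting this into the previous display gives Drury's bound.

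For the equality case, Theorem~\ref{thm2} asserts equality iff $Y=0$ and both $X$ and $Z$ are symmetric. The scalar $X=[t_{11}]$ is automatically symmetric, while an upper triangular symmetric matrix must be diagonal, forcing $Z$ diagonal. Together with $Y=0$, this says $T$ is diagonal, and the converse direction is a trivial computation. I do not foresee any genuine obstacle; the only point worth flagging is the simplification $\det(I_{n-1}+\overline{Z}Z)=\prod_{j=2}^n(1+|t_{jj}|^2)$, which is exactly what lets Theorem~\ref{thm2} produce Drury's inequality as an \emph{immediate} corollary rather than through induction.
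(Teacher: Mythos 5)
Your proof is correct and follows exactly the route the paper intends: the paper states the corollary as an immediate consequence of Theorem~\ref{thm2}, and your single application with $r=1$, together with the observation that $\overline{Z}Z$ stays upper triangular so that $\det(I_{n-1}+\overline{Z}Z)=\prod_{j=2}^n(1+|t_{jj}|^2)$, is precisely what makes it immediate. The equality analysis (scalar $X$ trivially symmetric, upper triangular symmetric $Z$ forced to be diagonal, $Y=0$) is also the right use of Theorem~\ref{thm2}'s equality condition.
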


\section{More results}

The absolute value of a  matrix $X$ is defined to be the matrix $|X|=(X^*X)^{1/2}$, the unique positive semidefinite square root of $X^*X$. The  inequality    (\ref{thm1e}) can be rewritten as
\begin{eqnarray}\label{e31} \det\left(\sum_{k=1}^m |T_k|^2\right)\ge\det\left(\sum_{k=1}^m |X_k|^2\right)\cdot\det\left(\sum_{k=1}^m |Z_k|^2\right).
\end{eqnarray}

The following result is an extension of Corollary \ref{c0}.
\begin{thm}\label{thm3}  Let $T=\begin{bmatrix} X &Y\\ 0  &  Z\end{bmatrix}$ be an $n$-square matrix, where $X, Z$ are $r$-square and $(n-r)$-square, respectively. Then for any $p\ge 1$
 \begin{eqnarray} \label{thm3e}
\det\left(I_n+|T|^p\right)\ge  \det\left(I_r+|X|^p\right)\cdot \det\left(I_{n-r}+|Z|^p\right).\end{eqnarray}
Equality holds in (\ref{thm3e}) if and only if $Y=0$.  \end{thm}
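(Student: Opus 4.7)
The inequality (\ref{thm3e}) is equivalent to a comparison of products over singular values,
\[
\prod_{j=1}^n\bigl(1+\sigma_j(T)^p\bigr)\;\ge\;\prod_{j=1}^n\bigl(1+\sigma_j(X\oplus Z)^p\bigr),
\]
where $X\oplus Z:=\diag(X,Z)$, since each side of (\ref{thm3e}) factors as such a product.  The plan is (i) to establish a log-majorization between these singular-value vectors, and (ii) to apply Hardy--Littlewood--P\'olya with the strictly convex increasing function $\phi(s)=\log(1+e^{ps})$, in the spirit of Lemma~\ref{lem1}.

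The key step is the log-majorization
\[
\prod_{j=1}^k\sigma_j(T)\;\ge\;\prod_{j=1}^k\sigma_j(X\oplus Z),\qquad k=1,\ldots,n,
\]
with equality at $k=n$.  I would prove this by passing to the $k$-th compound $\wedge^kT$.  Under the canonical decomposition $\wedge^k(\mathbb{C}^r\oplus\mathbb{C}^{n-r})=\bigoplus_{i+j=k}\wedge^i\mathbb{C}^r\otimes\wedge^j\mathbb{C}^{n-r}$, the matrix $\wedge^kT$ is block upper triangular with diagonal blocks $\wedge^iX\otimes\wedge^jZ$; these are exactly the summands of $\wedge^k(X\oplus Z)$.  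Since the spectral norm of a block upper triangular matrix dominates that of any of its diagonal blocks,
\[
\prod_{j=1}^k\sigma_j(T)=\|\wedge^kT\|\;\ge\;\max_{i+j=k}\|\wedge^iX\otimes\wedge^jZ\|=\|\wedge^k(X\oplus Z)\|=\prod_{j=1}^k\sigma_j(X\oplus Z),
\]
and equality at $k=n$ is immediate from $|\det T|=|\det X|\,|\det Z|=|\det(X\oplus Z)|$.

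A direct computation gives $\phi''(s)=p^2e^{ps}/(1+e^{ps})^2>0$, so $\phi$ is strictly convex and strictly increasing.  After a standard continuity perturbation making $T$ nonsingular (for instance $T_\varepsilon=T+\varepsilon I_n$, which preserves the block upper triangular shape), the log-majorization translates to ordinary majorization of $(\log\sigma_j(T_\varepsilon))$ over $(\log\sigma_j(X_\varepsilon\oplus Z_\varepsilon))$ with equal sums, and Hardy--Littlewood--P\'olya gives the desired inequality for $T_\varepsilon$.  Letting $\varepsilon\to 0$ yields (\ref{thm3e}).

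For the equality assertion, the direction $Y=0\Rightarrow$ equality is trivial.  Conversely, if equality holds in (\ref{thm3e}), then strict convexity of $\phi$ forces equality in Hardy--Littlewood--P\'olya and hence $\sigma_j(T)=\sigma_j(X\oplus Z)$ for every $j$.  Comparing Frobenius norms,
\[
\|X\|_F^2+\|Y\|_F^2+\|Z\|_F^2=\|T\|_F^2=\sum_j\sigma_j(T)^2=\sum_j\sigma_j(X\oplus Z)^2=\|X\|_F^2+\|Z\|_F^2,
\]
so $\|Y\|_F=0$, i.e.\ $Y=0$.  The principal obstacle I expect is the compound-matrix bookkeeping in the log-majorization step; once that structure is in place, the convexity and majorization machinery runs on autopilot.
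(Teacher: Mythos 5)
Your proposal is correct, and its skeleton is the same as the paper's: both reduce (\ref{thm3e}) to the log-majorization $\prod_{j=1}^k\sigma_j(X\oplus Z)\le\prod_{j=1}^k\sigma_j(T)$ for $k=1,\ldots,n$ with equality at $k=n$, and then apply majorization with the strictly convex increasing function $t\mapsto\log(1+e^{pt})$. The differences are in the two supporting steps. For the log-majorization, the paper polar-decomposes the diagonal blocks, writes $U^*T=\begin{bmatrix}|X|&U_1^*Y\\ 0&|Z|\end{bmatrix}$ with $U=\diag(U_1,U_2)$ unitary, and invokes Weyl's inequality $\prod_{j\le k}|\lambda_j|\le\prod_{j\le k}\sigma_j$, the point being that the eigenvalues of $U^*T$ are exactly the singular values of $X\oplus Z$; your antisymmetric-power argument reaches the same majorization without any polar decomposition, at the cost of the compound-matrix bookkeeping you anticipate (which is standard and does go through: $\wedge^kT$ is block triangular with diagonal blocks $\wedge^iX\otimes\wedge^jZ$, and the greedy choice of $i+j=k$ recovers $\prod_{j\le k}\sigma_j(X\oplus Z)$). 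For the converse equality direction, the paper argues that equality forces $|\lambda_j(U^*T)|=\sigma_j(U^*T)$ for all $j$, hence $U^*T$ is normal, hence $U_1^*Y=0$; your route via $\sigma_j(T)=\sigma_j(X\oplus Z)$ for all $j$ and the Frobenius-norm identity $\|T\|_F^2=\|X\|_F^2+\|Y\|_F^2+\|Z\|_F^2$ is more elementary and is essentially the same trace trick the paper uses inside Lemma \ref{lem1}. One shared soft spot: when $T$ is singular some $\log\sigma_j$ are $-\infty$, so the strict-convexity equality analysis needs a word of care; you handle this for the inequality direction by perturbation (the paper does not address it at all), but your equality analysis, like the paper's, implicitly assumes the nonsingular case.
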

 \begin{proof} The proof is similar to the one given in \cite[Lemma 4]{Dru02}. Let $X=U_1|X|$, $Z=U_2|Z|$ be the polar decomposition of $X, Z$, respectively. Taking $U=\begin{bmatrix} U_1 & 0 \\ 0&U_2\end{bmatrix}$ (so $U$ is unitary) gives $U^*A= \begin{bmatrix} |X| &U_1^*Y\\ 0  &  |Z|\end{bmatrix}$. We have by Weyl's inequality \cite[p. 317]{MOA11}
 \begin{eqnarray*} \prod_{j=1}^k|\lambda_j(U^*T)|\le \prod_{j=1}^k \sigma_j(U^*T)=\prod_{j=1}^k \sigma_j(T), \qquad k=1, \ldots, n,
  \end{eqnarray*}
 i.e.,
 \begin{eqnarray*} \prod_{j=1}^k\lambda_j\left(\begin{bmatrix} |X| &0\\ 0  &  |Z|\end{bmatrix}\right)\le \prod_{j=1}^k \sigma_j(T), \qquad k=1, \ldots, n,
\end{eqnarray*} where equality holds when $k=n$.

 By the convexity of the function $f(t)=\log(1+e^{pt})$ for $p\ge 1$, we obtain from \cite[p. 156]{MOA11} that
 \begin{eqnarray*}\det(I_r+|X|^p)\cdot\det(I_{n-r}+|Z|^p)=\det\left(I_n+\begin{bmatrix} |X|^p &0 \\0 & |Z|^p\end{bmatrix}\right)\le \det(I_n+|T|^p).
\end{eqnarray*} This proves (\ref{thm3e}).

If $Y=0$, then clearly  \begin{eqnarray*}
\det\left(I_n+|T|^p\right)=\det\left(I_r+|X|^p\right)\cdot \det\left(I_{n-r}+|Z|^p\right).\end{eqnarray*}

Conversely, if  \begin{eqnarray*}
\det\left(I_n+|T|^p\right)=\det\left(I_r+|X|^p\right)\cdot \det\left(I_{n-r}+|Z|^p\right),\end{eqnarray*} the strict convexity of $f(t)=\log(1+e^{pt})$, $p\ge 1$, gives that  \begin{eqnarray*} \prod_{j=1}^k\lambda_j\left(\begin{bmatrix} |X| &U_1Y\\ 0 &  |Z|\end{bmatrix}\right)=\prod_{j=1}^k \sigma_j\left(\begin{bmatrix} |X| &U_1Y\\ 0  &  |Z|\end{bmatrix}\right), \qquad k=1, \ldots, n.
\end{eqnarray*}  And so $\begin{bmatrix} |X| &U_1Y\\ 0  &  |Z|\end{bmatrix}$ is normal, which is the case only if $U_1Y=0$ and therefore $Y=0$.
\end{proof}

Nevertheless, (\ref{e31}) does not have such an analogue. We show by an example that, in general, it is not true that
\begin{eqnarray}\label{e21} \det\left(|T_1|+|T_2|\right)\ge\det\left(|X_1|+|X_2|\right)\cdot\det\left(|Z_1|+|Z_2|\right),
\end{eqnarray} where $T_1, T_2$, are as in Theorem \ref{thm1}.

\begin{eg} Taking
\begin{eqnarray*}T_1=\left[\begin{array}{cc:cc} 2&-3&9&-1\\-4&15&1&-19 \\ \hdashline
 0&0&0&-2 \\ 0&0&-4&19 \end{array}\right], \qquad  T_2=\left[\begin{array}{cc:cc} 0&1&6&0\\4&-12&12&10\\ \hdashline
 0&0&14&-2 \\ 0&0&23&-3 \end{array}\right],
\end{eqnarray*} a simple calculation using Matlab gives \begin{eqnarray*}\det\left(|T_1|+|T_2|\right)= 5193.1<\det\left(|X_1|+|X_2|\right)\cdot\det\left(|Z_1|+|Z_2|\right)=20248.\end{eqnarray*} \end{eg}



\begin{thebibliography}{HD}




\normalsize
\baselineskip=17pt

 \bibitem{Djo76} D. Z. Djokovi\'{c}, On some representations of matrices, Linear Multilinear Algebra 4 (1976) 33-40.
 \bibitem{DL76}  D. Z. Djokovi\'{c}, O. P. Lossers, Problem  E2525, $\det\left(I+A\overline{A}\right)\ge 0$,  Amer. Math. Monthly  83 (1976) 483-484.
 \bibitem{Dru02} S. W. Drury,  The canonical correlations of a $2\times 2$ block matrix with given eigenvalues, Linear Algebra Appl. 354 (2002) 103-117.

 \bibitem {HJ13} R. A. Horn, C. R. Johnson, Matrix Analysis, Cambridge University
Press, 2nd ed., 2013.
\bibitem{MOA11} A.W. Marshall, I. Olkin, B. Arnold, Inequalities: Theory of
Majorization and Its Applications, Springer, New York, 2nd ed., 2011.

\end{thebibliography}
\end{document}